\begin{document}
\begin{center}
\textbf{\Large New Directions for Primality Test}\\[0.5cm]
\textbf{Lakshmi Prabha S}$^{a}$         $\bullet $      \textbf{T.N.Janakiraman}$^{b}$ \\
Department of Mathematics, National Institute of Technology,\\
Trichy-620015, Tamil Nadu, India.\\
Emails: $^{a}$jaislp111@gmail.com, $^{b}$janaki@nitt.edu\\[2cm]
\end{center}

\begin{abstract}
In this paper, two approximation algorithms are given. Let $N$ be an odd composite number. The algorithms give  new directions regarding primality test of given $N$. The first algorithm is given using a new method called  digital  coding method. It is conjectured that the algorithm finds a  divisor of $N$ in at most $O(\ln^{4} N)$, where $\ln$ denotes the logarithm with respect to base 2. The algorithm can be applied to find the next largest Mersenne prime number. Some directions are given regarding this. The second algorithm finds a prime divisor of $N$ using the concept of graph pairs and it is proved that the time complexity of the second algorithm is at most $O(\ln^{2} N)$ for infinitely many cases (for approximately large $N$). The advantages and disadvantages of the  second algorithm are also analyzed. 

\keywords{Factorization \and primality \and congruences \and graph pairs \and digital coding \and approximation algorithm \and time complexity \and Mersenne primes.}
\subclass{11A51 \and 11A07 \and 11A41}
\end{abstract}

\section{Introduction}

First, let us give some basic definitions of number theory from Apostol~\cite{NT}, which are used in this paper.\\ 

We say that an integer $d$ divides an integer  $n$ and write $d|n$ whenever $n=cd$ for some integer $c$. If $d$ divides two integers $a$ and $b$, then $d$ is called a \textit{common divisor} of $a$ and $b$.
\begin {definition} \textbf{G.C.D. of two numbers} Let $a$ and $b$ be two numbers both not zero. Then the Greatest Common Divisor (G.C.D.) of $a$ and $b$ is a number $d$ such that
\begin{itemize}
	\item $d$ is a common divisor of $a$ and $b$, and 
\item every common divisor of $a$ and $b$ divides this $d$. 
\end{itemize}
Let us denote it by $gcd(a,b)$. If $gcd(a,b)=1$, then $a$ and $b$ are said to be relatively prime.
\end{definition}
\begin{definition}
An integer $N$ is called prime if $N > 1$ and if the only positive divisors of $N$ are 1 and $N$. If $N > 1$ and $N$  is not prime, then $N$ is called composite.\end{definition}
\begin{definition} Given integers $a,b,m$ with $m > 0$. Then $a$ is said to be congruent to $b$ modulo $m$, if $m$ divides the difference $(a-b)$. This is denoted by $a \equiv b($mod $m)$. The number $m$ is called the modulus of the congruence.
 \end{definition}

We list some of the following basic theorems in  number theory from Apostol~\cite{NT}, which are used for our results.\\
\textbf{Properties:} 
\begin{theorem}
\label{product}
If $a \equiv b ($mod $m)$ and $\alpha \equiv \beta ($mod $m)$, then $a\alpha \equiv b\beta ($mod $m)$.
\end{theorem}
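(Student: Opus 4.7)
The plan is to work directly from the definition of congruence modulo $m$. By definition, $a \equiv b \pmod{m}$ means $m \mid (a-b)$, and $\alpha \equiv \beta \pmod{m}$ means $m \mid (\alpha - \beta)$. So first I would introduce integers $k$ and $j$ with $a - b = mk$ and $\alpha - \beta = mj$, turning the hypotheses into algebraic equalities that can be manipulated.

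The central computation is to rewrite $a\alpha - b\beta$ in a form where the divisibility by $m$ is transparent. The standard trick is to insert a mixed term: I would write
\begin{equation*}
a\alpha - b\beta \;=\; a\alpha - b\alpha + b\alpha - b\beta \;=\; (a-b)\alpha + b(\alpha - \beta).
\end{equation*}
Substituting the expressions from the previous step gives $a\alpha - b\beta = mk\alpha + bmj = m(k\alpha + bj)$, which exhibits $m$ as an explicit divisor.

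From here, the conclusion follows immediately: since $k\alpha + bj$ is an integer, $m \mid (a\alpha - b\beta)$, which is the definition of $a\alpha \equiv b\beta \pmod{m}$. There is no genuine obstacle in this proof; the only point that requires a moment's thought is choosing the correct mixed term (either $b\alpha$ or $a\beta$ works) so that each resulting summand directly inherits divisibility by $m$ from exactly one of the two hypotheses. Everything else is bookkeeping and an appeal to the definition.
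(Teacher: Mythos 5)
Your proof is correct and complete: the decomposition $a\alpha - b\beta = (a-b)\alpha + b(\alpha-\beta)$ together with the hypotheses $m \mid (a-b)$ and $m \mid (\alpha-\beta)$ immediately yields $m \mid (a\alpha - b\beta)$. The paper itself gives no proof of this statement --- it is quoted as a standard property from Apostol's textbook --- and your argument is precisely the standard one found there, so there is nothing to compare or correct.
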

\begin{theorem}
\label{cancel}
$ac \equiv bc ($mod $mc)$ iff $a \equiv b ($mod $m)$.
\end{theorem}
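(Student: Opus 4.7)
The plan is to unwind the definitions on both sides of the biconditional and reduce the claim to a single divisibility statement that yields to cancellation by a common factor. By the definition of congruence, $ac \equiv bc \,($mod $mc)$ means $mc \mid (ac - bc) = c(a-b)$, while $a \equiv b \,($mod $m)$ means $m \mid (a-b)$. So the entire theorem reduces to the equivalence $mc \mid c(a-b) \iff m \mid (a-b)$, under the standing assumption that $mc$ is a valid modulus (so $c \neq 0$, and in fact $c > 0$ under the convention that moduli are positive).

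For the forward direction, I would assume $mc \mid c(a-b)$ and write $c(a-b) = (mc)k$ for some integer $k$. Since $c \neq 0$, cancelling $c$ from both sides of the equation gives $a - b = mk$, i.e., $m \mid (a-b)$. For the converse, I would assume $m \mid (a-b)$, write $a - b = mk$, and multiply through by $c$ to obtain $c(a-b) = (mc)k$, so $mc \mid c(a-b)$ directly from the definition of divisibility.

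The only real subtlety is the treatment of $c$: because $mc$ serves as a modulus it must be nonzero, which is precisely what is needed to legitimize the cancellation step in the forward implication. The reverse implication is a pure multiplication and requires no nonvanishing hypothesis at all. I do not anticipate any genuine obstacle beyond carefully tracking this positivity convention on moduli; everything else is a one-line manipulation of integer equations.
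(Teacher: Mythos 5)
Your proof is correct: reducing both sides to the divisibility statements $mc \mid c(a-b)$ and $m \mid (a-b)$ and then cancelling (resp.\ multiplying by) the nonzero factor $c$ is the standard argument, and your observation that $c \neq 0$ is guaranteed by the positivity of the modulus $mc$ handles the only delicate point. The paper itself offers no proof of this statement --- it is quoted as a known result from Apostol --- so there is nothing to compare against; your argument is exactly the textbook one.
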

\begin{theorem}
\label{Fermat}
Fermat's Theorem: If a prime $p$ does not divide \lq $a$\rq,\\ then $a^{(p-1)} \equiv 1 ($mod $p)$.
\end{theorem}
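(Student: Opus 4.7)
The plan is to use the classical multiplicative-permutation argument, which fits naturally with the tools already assembled in the excerpt. Let $p$ be a prime with $p \nmid a$, and consider the $p-1$ multiples
\[
a,\ 2a,\ 3a,\ \ldots,\ (p-1)a.
\]
I would first show that these are pairwise incongruent modulo $p$: supposing $ja \equiv ka \pmod{p}$ for $1 \le j,k \le p-1$, a cancellation argument (using Theorem~\ref{cancel} together with $\gcd(a,p)=1$, which holds because $p$ is prime and $p \nmid a$) would force $j \equiv k \pmod p$ and hence $j=k$ since both lie in $\{1,\ldots,p-1\}$. I would then observe that none of the residues is $\equiv 0 \pmod p$, since $p$ divides neither $a$ nor any $k \in \{1,\ldots,p-1\}$ and $p$ is prime. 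Together these two facts show that $a, 2a, \ldots, (p-1)a$ is, modulo $p$, a permutation of $1, 2, \ldots, p-1$.

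Next I would take the product of all $p-1$ resulting congruences, applying Theorem~\ref{product} inductively, to obtain
\[
a^{p-1}(p-1)!\ \equiv\ (p-1)!\ \pmod{p}.
\]
Cancelling $(p-1)!$ from both sides would then yield the desired conclusion $a^{p-1} \equiv 1 \pmod{p}$.

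\textbf{The main obstacle} is the final cancellation. Theorem~\ref{cancel} as stated permits cancellation of a common factor $c$ only when the modulus is itself a multiple of $c$, whereas here I need the coprime-cancellation principle: if $\gcd(c,m)=1$ and $ac \equiv bc \pmod m$, then $a \equiv b \pmod m$. Since $p$ is prime and $(p-1)! = 1 \cdot 2 \cdots (p-1)$ contains no factor of $p$, we have $\gcd((p-1)!,p)=1$, so the required cancellation is legitimate. I would either invoke this as a standard corollary of the basic divisibility properties or insert a short lemma deriving it. The same issue already arises at the distinctness step when cancelling $a$, and both occurrences can be handled uniformly.
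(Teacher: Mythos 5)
Your proposed proof is the standard permutation argument for Fermat's little theorem and it is correct; however, the paper itself offers no proof of this statement at all --- it is simply quoted as a known result from Apostol's book, so there is nothing internal to compare against. Your argument is sound as written, and you are right to flag that Theorem~\ref{cancel} (which cancels $c$ only at the cost of shrinking the modulus from $mc$ to $m$) is not the tool needed for the final step: you genuinely need the coprime-cancellation lemma (if $\gcd(c,m)=1$ and $ac \equiv bc \pmod{m}$ then $a \equiv b \pmod{m}$), both for the distinctness of $a, 2a, \ldots, (p-1)a$ and for cancelling $(p-1)!$, so the short auxiliary lemma you propose should indeed be stated and proved rather than merely invoked.
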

For more details on basics of number theory, readers are directed to refer Apostol~\cite{NT}.\\
The remaining part of the paper is organized as follows: 
\begin{itemize}
\item Section~\ref{sec2} discusses about prior work.
	\item Section ~\ref{sec3} contains an interesting approximation algorithm, which is given using a new approach.The intricate portions of the algorithm are discussed and a conjecture is given regarding the time complexity of the  algorithm.
\item In section ~\ref{sec4}, a simple approximation algorithm is given using the concept of graph pairs. Time complexity and correctness of that algorithm are given. The advantages and disadvantages of that algorithm are also discussed in this section. Further a procedure is given using that algorithm for primality test and an open problem is given regarding its time complexity.
\end{itemize}
\section{Prior Work}
\label{sec2}
A primality test is an algorithm for determining whether an input number $N$ is prime. It has a lots of applications in cryptography and network security. For centuries, number theory was considered to be the most pure form of Mathematics, because there were no practical applications, as far as anyone could tell. However, in the latter half of the 20th century, number theory became
central to developments in digital security, for example,  public key cryptography, 
credit card check digits and so on. If one is capable  to quickly factor an integer into a product of two large primes and verify that they were both primes, then he would be able to break
into most banking systems.
 
Most popular primality tests are probabilistic tests. These tests use, apart from the tested number $N,$ some other numbers $a$ which are chosen at random from some sample space; The simplest is Fermat Primality test. Miller- Rabin primality test and Solovay and Strassens primality test are also probabilistic tests. Miller~\cite{Mill} in 1975, Rabin~\cite{Rabin} in 1980 and Solovay and Strassens ~\cite{SS} in 1974, gave randomized algorithms. Their method can be made deterministic under the assumption of Extended Riemann Hypothesis (ERH). Since then, a number 
of randomized polynomial-time algorithms have been proposed for primality testing, based on many
different properties.

In 1983, Adleman, Pomerance, and Rumely~\cite{APR} achieved a major breakthrough by giving a deterministic algorithm for primality that runs in \\$(\ln N)^{O(\ln \ln \ln N)}$ time (all the previous deterministic algorithms required exponential time). 

Then, in 1986, Goldwasser and Kilian~\cite{GK} and Atkin~\cite{ATK} also  gave randomized algorithms, based on elliptic curves. In 1992, Adleman and  Huang~\cite{Huang} modified the Goldwasser-Kilian method and presented an errorless (but expected polynomial-time) variant of the elliptic curve primality test. But it is also a randomized algorithm that runs in expected polynomial-time on all inputs.

In 2002, the first provably polynomial time test for primality was invented by  Agrawal, Kayal and  Saxena~\cite{AKS}. They proved that AKS primality test, runs in $O(\ln^{7.5} N)$ and  which can be further reduced to $O(\ln^{6}N)$ if the Sophie Germain conjecture is true. 

The aim of this paper is to reduce this time complexity. We have given two approximation (randomized) algorithms. They do not deterministically distinguish whether the given number is prime or not in polynomial time. But the second algorithm gives good results for infinitely many cases, (not for all inputs). It is proved that our second  algorithm finds a prime divisor of $N$ in at most $O(\ln^{2} N)$ for infinitely many cases (for approximately large $N$).
  
\section{Digital Coding Algorithm}
\label{sec3}
In this section, we give an interesting approximation algorithm, the procedure of which is new and different. The algorithm finds a divisor of $N$.  We have employed binary and decimal equivalent of the numbers. It is observed that the time complexity of the algorithm is at most $O(\ln^{4}N)$. The correctness of the algorithm is verified (manually) for $N \leq 10^{5}$. So, we pose the conjecture that the algorithm finds a divisor of $N$, in at most $O(\ln^{4}N)$, for any odd composite $N$. Also, we pose some other conjectures supporting this conjecture. 

Some applications of the algorithm are  also mentioned, mainly, there are higher possibilities to find the next largest Mersenne prime number using this method.
\begin{note} In this section (alone), the notation \lq $|$\rq \ does not denote \lq divides\rq \  symbol. The notation \lq $|$\rq  \ denotes the partition of a number (in this section alone). For example, $4|5$ does not mean 4 divides 5. But it means that the number 45 is partitioned digit-wise.
\end{note}
\begin{definition}
\textbf{Digital Binary Equivalent} A binary number $B$ is said to be digital binary equivalent of the decimal number $A$, if $B$ is obtained from $A$ by finding binary equivalents of each digit of $A$ (starting from left and ending at right) and then taking union of those binary equivalents.\\
For example, consider $A = 872$. First, let us find binary equivalent of each digit of $A$.
Binary equivalent of 8 is 1000; 7 is 111 and that of 2 is 10. Next, we take union of these binary numbers and write it as: 100011110. So, digital binary equivalent of 872 is 100011110. Let us represent this process in the following form: $8|7|2 \stackrel{dig-bin}{\rightarrow} 1000|111|10$.
\end{definition}
\subsection{Notation}
The following notation is used throughout this section~\ref{sec3}.
\begin{itemize}
\item Let $N = n_1 n_2 \ldots n_r$, where $n_i$ is the digit of the number $N$ and $r$ is the total number of digits of $N$.
\item $B_j$ denotes a binary number. It is obtained by finding digital binary equivalent of a decimal number $N$ or $D_j$.
\item $D_j$ denotes a decimal number. It is obtained by finding decimal equivalent of the binary number $B_j$.
\item $A \stackrel{decimal}{\rightarrow} B$ denotes that the number $A$ is in binary form,  the number $B$ is in decimal form and $B$ is obtained from $A$ by finding decimal equivalent of $A$.
\item $A \stackrel{dig-bin}{\rightarrow} B$ denotes that the number $A$ is in decimal form,  the number $B$ is in binary form and $B$ is obtained from $A$ by finding digital binary  equivalent of $A$.
\item $A \stackrel{dig-bin,zero}{\rightarrow} B$ denotes that the number  $A$ is in decimal form,  the number $B$ is in binary form.  $B$ is the digital binary  equivalent of $A$ with some extra zeros, that is, first we find the digital binary equivalent of $A$, say $A_1$ and then zeros are inserted in between the binary equivalents of some digits of $A$ (in order to get result). This digital binary equivalent with extra zeros is the result $B$.  
\item $A \stackrel{zero}{\rightarrow} B$ denotes that the numbers $A$ and $B$ are in binary form and $B$ is obtained by inserting zeros in between the digits of $A$ (in order to get result). 
\end{itemize}
\subsection{APPROXIMATION ALGORITHM}
In this section, an approximation algorithm is given using  digital binary coding method, to find a divisor of odd composite $N$.\\
Input: $N$.\\ 
Output: \lq\lq$N$ is composite\rq\rq\, and $gcd(D_j,N)$, which is a divisor of $N$, if $N$ is composite.\\
No output is produced if $N$ is prime.\\
Algorithm 1:\\
JRLP($N$)
\begin{enumerate}
\item Initially let $j = 0$.\\
DBC($N$)
\item	Let $N = n_1 n_2 \ldots n_r$, where $n_i$ is the digit of the number $N$ and $r$ is the total number of digits of $N$.
\item Find the digital binary equivalent of $N$.
\item Put $j = j+1$ and store the digital binary number in $B_j$. 
\item Find the decimal equivalent of $B_j$ and store it in $D_j$.
\item If $gcd(D_j, N) \neq 1$, then print \lq\lq$N$ is composite\rq\rq\, and print $gcd(D_j,N)$ . 
\item else if (the number of digits of $D_j$) $r \neq 1$ or the number $D_j$ does not occur in any of the previous steps, call DBC$(D_j)$.
\item else GOTO next step. //At this stage, either $r$ becomes $1$ or a number $D_j$ is repeated, but still $gcd(D_j, N) = 1$. Initially, we have obtained  a sequence of binary and decimal equivalents starting from $N$. Let us call this initial sequence as the first chain of $N$. Next, we backtrack and find next chain of $N$.
\item Backtrack one step before in the first chain of $N$ and insert extra zeros \textbf{in between the binary equivalents} of $n_i$ (refer example 2) and then GOTO step 4. 
\item	Repeat backtracking (step 9) until we reach the starting point $N$.
\item If the process of inserting zeros and the remaining entire process (from steps 2 to 7) is over for $N$, then stop.// At this last stage, $gcd(D_j, N) = 1$.
\end{enumerate}
\textbf{Why the name Digital Coding?} \\
We name this method as \lq\lq DIGITAL CODING METHOD\rq\rq, because we give binary coding for each digit of the decimal number, not for the entire decimal number. \subsection{EXPLANATION WITH ILLUSTRATIONS}
\textbf{Example 1:} Take $N = 88837$.\\
$8|8|8|3|7 \stackrel{dig-bin}{\rightarrow} 1000|1000|1000|11|111$ (Digital coding of 88837)\\ 
Now, $B_1 = 10001000100011111 \stackrel{decimal}{\rightarrow} 69919 = D_1$ (Decimal equivalent of $B_1$).  As, $gcd(D_1,N) = 1$, we continue this procedure. The step by step executions of the whole process of Algorithm 1 is given in the  Table~\ref{tab:1}.
\begin{table}[h]
\caption{Step by Step Executions of Algorithm 1}
\label{tab:1}      
\begin{tabular}{lllll}
\hline\noalign{\smallskip}
$j$	& $D_{j-1}$	& $B_j$	&  $D_j$	&  $gcd(D_j,N)$\\
\noalign{\smallskip}\hline\noalign{\smallskip}

1 & 	88837 = $N$	& 10001000100011111 &	69919 &	1 \\
2	& 69919	& 1101001100111001	& 54073 &	1\\
3	& 54073 &	101100011111	& 2847	& 1 \\
4	& 2847	 & 101000100111 &	2599	& 1 \\
5	& 2599	& 1010110011001	& 5529	& 1\\
6	& 5529 &	101101101001	& 2921	& 1\\
7	& 2921	& 101001101	& 333	& 37\\
\noalign{\smallskip}\hline
\end{tabular}
\end{table}
\\ \\ \textbf{Output:}\lq\lq$N$ is composite\rq\rq\ and $gcd(D_j,N) = 37$.\\
\textbf{Example 2:} Take $N = 15$.\\
 $1|5 \stackrel{dig-bin}{\rightarrow}  1|101 \stackrel{decimal}{\rightarrow} 1|3 \stackrel{dig-bin}{\rightarrow}  1|11  \stackrel{decimal}{\rightarrow} 7$.\\This sequence is called the \textbf{first chain} of $N$.\\
(13 is the decimal equivalent of 1101. Then 13 is digitally coded. 7 is the decimal equivalent of 111).\\
\textbf{Backtrack 1:}   $1|3 \stackrel{dig-bin,zero}{\rightarrow}  1|011 \stackrel{decimal}{\rightarrow}  11 $. \\ This sequence is called \textbf{second chain} of $N$.(Originally the digital coding  for the number 13 is 111, that is, $1|3 \stackrel{dig-bin}{\rightarrow} 1|11$. Now, we have inserted zeros in between the two digital codings, that is, between 1 and 11 (or in front of 11)). \\
Next we backtrack in the backtrack 1.\\
\textbf{Backtrack 2:} Consider $1|1 \stackrel{zero}{\rightarrow} 101 \stackrel{decimal}{\rightarrow} 5$.\\ This sequence is called \textbf{third chain} of $N$ and we get $gcd(5,15) = 5$. 
\subsection{ NP-COMPLETENESS}
The process of inserting zeros is NP-Complete.\\
\textbf{Explanation:} There is no proper rule in inserting zeros. In the algorithm, first we do the procedure without inserting any extra zeros. If result does not come, we do backtracking and then insert zeros. Even at the time of back tracking also, there are three possibilities.
\begin{enumerate}
\item We have to insert zeros at some stage and need not insert zeros at some other stage so that we  get correct result. 
\item Consider one particular stage (only one number). We may have to insert zeros for some digit of a number and need not insert zeros for some other digit of the number so that we get correct result.
\item We can insert zeros uniformly for all digits at a particular stage  and uniformly for all numbers so that we get correct result.
\end{enumerate}

So, we do not uniformly insert zeros. The obvious question is among these possibilities, which has to be used. The questions are: 
\begin{itemize}
	\item Where to insert zeros? 
	\item When to insert zeros? and 
\item How to insert zeros?
\end{itemize}

Suppose we assume that we insert zeros with a rule. \\
\textbf{Rule: Equally expanding:} Add zeros to make digits same. \\
For example, if $N$ = 51, then $B_1$ is 1011, that is, $5|1 \stackrel{dig-bin}{\rightarrow} 101|1$. Here 101 is of three digits and 1 is of 1 digit. We can code 1 as 01, 001, 00001 and so on. But, we make 1 to be of three digits and hence we code  1 as 001. Now, 51 can be coded as 101001, that is, $5|1 \stackrel{dig-bin,zero}{\rightarrow} 101|001$.   This is called \textbf{equally expanding method}. 

The surprising thing is sometimes the result occurs in few steps, but unknowingly, we would have tried many steps. \\ For example, consider $N=451$.\\
$4|5|1 \stackrel{dig-bin}{\rightarrow} 100|101|1 \stackrel{decimal}{\rightarrow} 7|5 \stackrel{dig-bin}{\rightarrow} 111|101 \stackrel{decimal}{\rightarrow} 6|1 \stackrel{dig-bin}{\rightarrow} 110|1 \stackrel{decimal}{\rightarrow} 1|3 \stackrel{dig-bin}{\rightarrow} 1|11 \stackrel{decimal}{\rightarrow} 7$.\\
\textbf{Backtrack:} $13 \stackrel{dig-bin,zero}{\rightarrow} 1|011 \stackrel{decimal}{\rightarrow} 11$ and we get $gcd(451,11) = 11$. This process takes 5 iterations.\\
Now, let us try using the equally expanding rule.\\
$4|5|1 \stackrel{dig-bin, zero}{\rightarrow} 100|101|001 \stackrel{decimal}{\rightarrow} 297$ and we get $gcd(451,297) = 11$. This process takes only 1 iteration.

So, there is no uniformity in this method.This leads to the NP-Completeness of the process. But we observed that if $N$ is composite, then we will surely get the divisor of $N$. So, we say that the probability of getting a divisor of odd composite $N$ using digital  coding method is always 1.That is,\\ P(getting divisor of odd composite $N$ using  digital coding method) = 1. But how it can be obtained is NP-complete.

As we do the back tracking step until we get result, this method of backtracking and inserting zeros is equivalent to trial and error method, unless there is a proper rule to do it. But there is a hidden rule to achieve it easily. 
\subsection{TIME COMPLEXITY}
It is already mentioned in the algorithm that at the end of step 8, one chain of $N$ will be formed, i.e, from steps 2 to 8, one chain will be formed. One iteration of the algorithm is defined as executing steps 2 to 6 one time. The time complexity of the algorithm lies in two main steps:
\begin{enumerate}
	
\item Finding $gcd(D_j, N)$ 
\item How many times $gcd(D_j,N)$ is found in the algorithm (Number of iterations of the algorithm) and
\item The number of backtracking steps.
\end{enumerate}

We find $gcd(D_j, N)$ at each iteration of the algorithm. Each computation of G.C.D. takes $O(\ln N)$ time~\cite{GCD}. Next, we do not know what is the number of iterations, that is, number of times $gcd(D_j,N)$ is found out in the algorithm. Also, we do not know what is the number of backtracking steps \textbf{with a proper rule of inserting zeros}, in the algorithm.

While verifying (manually) the algorithm for $N \leq 10^{5}$, we found that the total time complexity of the algorithm is at most $O(\ln^{2}N)$. We checked for some higher cases of $N$. We observed that in some cases, it directly gives result (without inserting zeros) within $O(\ln^{4}N)$ and in some cases, extra zeros have to be inserted and if zeros are \textbf{properly inserted}, it gives result within $O(\ln^{4}N)$. So, we give the following conjectures.\\[10pt]
\textbf{CONJECTURE 1:}
\textbf{JRLP's Conjecture:}\\
The total time complexity of the Algorithm 1 is at most $O(\ln^{4}N)$. This should be supported by the following statement:\\
The number of iterations and the number of backtracking steps \textbf{with a proper rule of inserting zeros} in the algorithm take at most $O(\ln^{4}N)$.\\[10pt]
\textbf{CONJECTURE 2:} P(getting a divisor of odd composite $N$ using  digital coding method) = 1.
\subsection{APPLICATIONS}
The primes of the form $M_p = 2^{p} - 1$, where $p$ is a prime, are called Mersenne primes. Let us call $p$ to be the generator prime of $M_p$.

We can generate Mersenne primes using this concept as follows: We applied JRLP algorithm (without inserting zeros)  in all  the generator primes, $p$, of the existing Mersenne primes $M_p$. We observed that the first chain (sequence) of the generator  primes, $p$, is terminated with either 11 or 7 or 47 or 5 or  9. So, Mersenne primes $M_p$ can be partitioned into five groups, based on the terminating number of the  first chain of the generator prime $p$. We observed that the first chain of  the generators of recent Mersenne primes are terminated with 11 and also there are many such Mersenne primes. So, we start form 11, do the converse process of the Algorithm 1 and try to get some generator primes and hence the  Mersenne primes. Similarly, we can start from any of the above mentioned numbers and try to get the generator primes and hence the  Mersenne primes.   The following are some examples, illustrating how the first chain of the  generator primes terminated with 11 or 7 or 47 or 5 or 9.
\begin{itemize}
	\item $8|9 \stackrel{dig-bin}{\rightarrow} 1000|1001 \stackrel{decimal}{\rightarrow} 1|3|7 \stackrel{dig-bin}{\rightarrow} 1| 11| 111 \stackrel{decimal}{\rightarrow} 6|3 \stackrel{dig-bin}{\rightarrow} 110| 11 \stackrel{decimal}{\rightarrow} 2|7 \stackrel{dig-bin}{\rightarrow} 10 |111 \stackrel{decimal}{\rightarrow} 2|3 \stackrel{dig-bin}{\rightarrow} 10| 11 \stackrel{decimal}{\rightarrow} 11$. (As we know that $N$ is prime, we need not backtrack and proceed further).
\item $1|7 \stackrel{dig-bin}{\rightarrow} 1| 111 \stackrel{decimal}{\rightarrow} 1|5 \stackrel{dig-bin}{\rightarrow} 1| 101 \stackrel{decimal}{\rightarrow} 1|3 \stackrel{dig-bin}{\rightarrow} 1|11 \stackrel{decimal}{\rightarrow} 7$.
\item	$1|2|7|9 \stackrel{dig-bin}{\rightarrow} 1 |10 |111| 1001 \stackrel{decimal}{\rightarrow} 8|8|9 \stackrel{dig-bin}{\rightarrow} 1000| 1000| 1001 \stackrel{decimal}{\rightarrow} 2|1|8|5 \stackrel{dig-bin}{\rightarrow} 10 |1| 1000| 101 \stackrel{decimal}{\rightarrow} 7|0|9 \stackrel{dig-bin}{\rightarrow} 111| 0| 1001 \stackrel{decimal}{\rightarrow} 2|3|3 \stackrel{dig-bin}{\rightarrow} 10| 11| 11 \stackrel{decimal}{\rightarrow} 47$.
\item $1|9|9|3|7 \stackrel{dig-bin}{\rightarrow} 1| 1001| 1001| 11| 111 \stackrel{decimal}{\rightarrow} 1|3|1|1|9 \stackrel{dig-bin}{\rightarrow} 1| 11| 1|1| 1001 \stackrel{decimal}{\rightarrow} 5|0|5 \stackrel{dig-bin}{\rightarrow} 101| 0| 101 \stackrel{decimal}{\rightarrow} 8|5 \stackrel{dig-bin}{\rightarrow} 1000| 101 \stackrel{decimal}{\rightarrow} 6|9 \stackrel{dig-bin}{\rightarrow} 110| 1001 \stackrel{decimal}{\rightarrow} 1|0| 5 \stackrel{dig-bin}{\rightarrow} 1| 0| 101 \stackrel{decimal}{\rightarrow} 2|1 \stackrel{dig-bin}{\rightarrow} 10| 1 \stackrel{decimal}{\rightarrow} 5$.
\item $2|2|0|3 \stackrel{dig-bin}{\rightarrow} 10| 10| 0| 11 \stackrel{decimal}{\rightarrow} 8|3 \stackrel{dig-bin}{\rightarrow} 1000| 11 \stackrel{decimal}{\rightarrow} 3|5 \stackrel{dig-bin}{\rightarrow} 11 |101 \stackrel{decimal}{\rightarrow} 2|9 \stackrel{dig-bin}{\rightarrow} 10| 1001 \stackrel{decimal}{\rightarrow} 4|1 \stackrel{dig-bin}{\rightarrow} 100|1 \stackrel{decimal}{\rightarrow} 9$.

\end{itemize}
Table~\ref{tab:2} shows the Mersenne primes $M_p$ partitioned in to five categories, based on the  terminating number of the first chain of their generators $p$.
\begin{table}[h]
\caption{Partition of Mersenne Primes}
\label{tab:2}      
\begin{tabular}{lllll}
\hline\noalign{\smallskip}
 Terminating \\with 11 &	 7 &	 47	&  5 &	9\\
\noalign{\smallskip}\hline\noalign{\smallskip} 
3 & 7 &    1279 &       5 & 2203 \\
89 &	17 & 3217	 &	 19937 &	9689\\
107	& 31 &	4253 &	  44497 \\
607 &	61	& 9941 &	  216091\\	
21701 &	127 &	11213 &	  756839\\	
110503 & 521	& 859433  &	25964951	\\
1257787	& 2281 & 2976221			\\
1398269 &	4423 &	3021377		\\
20996011 &	23209 &	13466917 \\		
24036583 &	86243 \\
32582657 &	132049			\\
37156667 &	6972593			\\
42643801 &	30402457			\\
43112609\\				
57885161	\\			
\noalign{\smallskip}\hline
\end{tabular}
\end{table}
\\[1cm] 
\textbf{CONJECTURE 3:} Mersenne primes can be partitioned into five groups only.\\
\subsubsection{Rough Method to generate  primes}
\begin{definition}
\textbf{Cell of a binary number} Let $A$ be the given binary number. Let it be of the form $A = r_1r_2\ldots r_t$, where $r_i$ is a digit of $A$ and $t$ is the total number of digits of $A$.
Next, let us partition the digits of $A$ as follows: $r_1r_2|r_3|r_4r_5r_6|r_7$. Then each partitioned (divided) portion of $A$ is called the \textbf{cell} of $A$. Here, $r_1r_2$ is called the first cell of $A$, $r_3$ is the second cell of $A$ and so on.
\end{definition}
\begin{definition}
\textbf{Cell-wise Decimal Equivalent} A decimal number $B$ is said to be cell-wise decimal equivalent of the binary number $A$, if $B$ is obtained from $A$ by finding decimal equivalents of each cell of $A$ (starting from left and ending at right) and then taking union of those decimal equivalents.

For example, consider $A = 1011$. First, let us find one partition of $A$. $A$ can be partitioned as $10|11$. Here, $10$ is the first cell of $A$ and 11 is the second cell of $A$. The decimal equivalnt of 10 is 2 and that of 11 is 3. Union of those decimal equivalents is 23. Thus, the cell-wise decimal equivalent of $A$ is 23. Let it be represented as follows:\\
$10|11\stackrel{cell-dec}{\rightarrow} 2|3$.\\ 
Find all the partitions of $A$. They are: $1|0|1|1$, $1|0|11$, $10|11$ and $101|1$.  The cell-wise decimal equivalents of each of the partitions of  $A$ are as follows: 
\begin{itemize}
	\item $1|0|1|1 \stackrel{cell-dec}{\rightarrow} 1|0|1|1$
	\item $1|0|11\stackrel{cell-dec}{\rightarrow} 1|0|3$, where 3 is the decimal equivalent of the binary number 11.
	\item $10|11\stackrel{cell-dec}{\rightarrow} 2|3$, where 2 is the decimal equivalent of the binary number 10 and 3 is the decimal equivalent of the binary number 11.
	\item $101|1\stackrel{cell-dec}{\rightarrow} 5|1$, where 5 is the decimal equivalent of the binary number 101.
\end{itemize} 
\end{definition}
\textbf{Notation}\\
The following notation is used for the procedure.
\begin{itemize}
\item $T$ denotes the starting number of the procedure and $T$ = 11 or 7 or 47 or 5 or 9.
\item $A$ denotes a binary number and it is obtained by finding the binary equivalent of $T$ or $E$.
\item $E$ denotes a decimal number and it is obtained by finding cell-wise decimal equivalent of $A$.
\item $A \stackrel{binary}{\rightarrow} B$ denotes that the number $A$ is in decimal form,  the number $B$ is in binary form and $B$ is obtained from $A$ by finding binary equivalent of $A$.

\item $A \stackrel{cell-dec}{\rightarrow} B$ denotes that the number $A$ is in binary form,  the number $B$ is in decimal form and $B$ is obtained from $A$ by finding cell-wise decimal equivalent of $A$.
\end{itemize}
\textbf{ROUGH PROCEDURE}\\
We start with any of the numbers 11, 7, 47, 5 and 9. Let $T$ = 11 or 7 or 47 or 5 or 9. \\
\textbf{Procedure:}\\
generate($T$)
\begin{enumerate}
\item Find the binary equivalent of $T$ and store it in $A$.
\item Find all the partitions of $A$.
\item \textbf{for} each partition of $A$,\\
$\{$
\begin{itemize}
	\item  Find cell-wise decimal equivalent of  $A$ and store it in $E$.
\item Call generate($E$) until we get the next largest prime number or Mersenne prime number.
\end{itemize}
 $\}$
\end{enumerate}
\textbf{ILLUSTRATION}\\Consider $T$ = 11.\\
Iteration 1: $11 \stackrel{binary}{\rightarrow}  1011$. Let $A =1011$.\\
All the partitions of $A= 1011$ are $1|0|1|1$, $1|0|11$, $10|11$ and $101|1$.\\
Consider a partition $1|0|11$. $1|0|11 \stackrel{cell-dec}{\rightarrow}1|0|3$. Let $E = 103$. Now, $2^{103}-1$ is not prime and hence 103 is not the generator prime. So, we continue the procedure.\\
Iteration 2: Consider $E = 103$.\\
$103 \stackrel{binary}{\rightarrow} 1100111$. Let $A = 1100111$.\\
Find all the partitions of $A$.\\
Let us consider  a partition $1|10|0|111$. $1|10|0|111 \stackrel{cell-dec}{\rightarrow} 1|2|0|7$.
Let $E = 1207$.\\
Next go to iteration 3 and so on.\\
 
Thus, this method may help to find the next largest prime number.
\section{Different Approach Algorithm using the concept of Graph Pairs}
\label{sec4}

There is a strong connection between number theory and graph theory. An example for such a connection is the \textit{cycle graph}, discussed in Anderson~\cite{cycle}. Many works are done connecting these two concepts.  Janakiraman and Boominathan~\cite{TNJ1,TNJ} have brought out a different kind of interplay between these two concepts. Using the theory of congruences, Janakiraman and Boominathan~\cite{TNJ1,TNJ} have defined graph pairs, simple graph pairs, the graph corresponding to a graph pair and so on. 

In this section, we use the concept of graph pairs to find a prime divisor of $N$. This paper deals only with the \textit{number theoretic concepts of graph pairs}.  For more details of graph pairs and its interplay between number theory and graph theory, readers are directed to refer Janakiraman and Boominathan~\cite{TNJ}. 

In this section,  a simple approximation algorithm is given for finding a prime divisor of $N$ using the concept of  graph pairs of $N$. Its time complexity is proved to be at most $O(\ln^{2}N)$, for infinitely many $N$ (also approximately large $N$). Then the disadvantages and advantages of the algorithm are analyzed. Also, using this algorithm, another procedure is given in this section to check up  whether the given number is prime or not. The time complexity of that procedure is discussed and an open  problem is given regarding the time complexity.

In this section, the notation $d|n$ denote $d$ divides $n$.
\subsection{DEFINITIONS}
Next let us give the definitions of graph pair and simple graph pair from Janakiraman and Boominathan~\cite{TNJ1,TNJ}.
\begin{definition} \textbf{Graph Pair}
 Given a positive integer $N$, the pair $(a,b)$ of positive integers is defined to be a graph pair if $a.b \equiv 1($mod $N)$. If $a^{2} \equiv 1 ($mod $N)$, then $(a,a)$ is defined as a \textbf{simple graph pair}.
 \end{definition}
 
In number theory terminology, $b$ is called the inverse of $a$ modulo $n$. If $gcd(a,n) = 1$, then $a$ has an inverse, and it is unique modulo $n$. 

$(a,b)$ is named as a graph pair because for  the graph pair $(a,b)$ of $N$, there is a graph on $(N-1)$ vertices. Let us discuss in detail as follows:\\ 
For a given positive integer $N$, let $(a,b)$ be a graph pair. Form a matrix $A = \left[a_{ij}\right]$ as follows:
\begin{equation*}
a_{ij} = 
\begin{cases}
1 & \text{ if either $i.a \equiv j ($mod $N)$ or $i.b \equiv j$ (mod $N$)}\\ 
$0$ & \text{otherwise, $i = 1,\ldots, N-1$.}
\end{cases}
\end{equation*}

Since $a$ and $b$ are relatively primes to $N$ and $i$ = $1,2,\ldots, N-1$; $j$ takes values from $1$ to $N-1$ only.
This always results in a symmetric binary square matrix $A$ of order $N-1$.
Since there is a one-to-one correspondence between a labeled graph on $N$ vertices and a $N \times N$ symmetric binary matrix, $(a,b)$ is named as a graph pair, that is, for  the graph pair $(a,b)$ of $N$, there is a graph on $(N-1)$ vertices, whose adjacency matrix is nothing but $A$.

\subsection{APPROXIMATION ALGORITHM}
\label{Algo3}
 In this section, an approximation algorithm is given to find a prime divisor of $N$.\\
Input: $N$\\
Output: \lq\lq$N$ is composite\rq\rq, $gcd(D,N)$, and $j$, if $N > 2^{j}$.\\
No output is produced if $N < 2^{j}$.\\
Algorithm 2:\\
GP($N$)
\begin{enumerate}
\item Let $j = 0$.
\item Put $j = j+1$.
\item If $(N-1)/2^{j}$ is an integer, then find the graph pair $((N-1)/a, N-a)$ such that $a = (N-1)/2^{j}$.
\item else find the graph pair $(2^{(j-1)}. 2, (b_{j-1}.b_1)($mod $N))$.
\item	Denote this graph pair as $(2^{j}, b_j)$.
\item	Find $|b_j- 2^{j}|$ and store it in $D$.
\item If $2^{j} < N$, then GOTO step 8 else stop.
\item If $gcd(D,N) = 1$, then GOTO step 2 else print \lq\lq$N$ is composite\rq\rq, print $gcd(D,N)$ and $j$ and stop.
\end{enumerate}
\begin{note}
\label{other}
 We have used two methods to generate the graph pairs, namely, $(2^{j}, b_j) = ((N-1)/a, N-a)$ such that $a = (N-1)/2^{j}$  and  $(2^{j}, b_j) = (2^{(j-1)}. 2, (b_{j-1}.b_1)($mod $N))$. There are many methods to generate the graph pairs. The third method is $(2^{j}, b_j) = (2^{j}, (b_1)^{j} ($mod $N))$. The fourth method is: $(2^{j}, b_j) = (2^{(j-1)}. 2, b_{j-1}/2)$, if $b_{j-1}/2$ is an integer. Thus,$b_j$ can be found from any of the following formula and they are equal.
  
\begin{itemize}

 \item $N-a$ (if $a = (N-1)/2^{j}$ is an integer) 
 \item $(b_{j-1}.b_1)($mod $N)$
 \item  $(b_1)^{j} ($mod $N)$
 \item $b_{j-1}/2$ (if $b_{j-1}/2$ is an integer).   
\end{itemize} 
 \end{note}
\subsection{ILLUSTRATION}
Let us consider $N = 96577$. The step by step process and the calculations of the algorithm are explained in the Table~\ref{tab:3}.
\begin{table}[h]
\caption{Step by Step Executions of Algorithm 2}
\label{tab:3}      
\begin{tabular}{llll}
\hline\noalign{\smallskip}
$j$ & $(2^{j}, b_j)$ &	$D =|b_j-2^{j}|$	& $gcd(D,N)$\\
\noalign{\smallskip}\hline\noalign{\smallskip}
1 & (2, 48289) &	48287	& 1 \\
2 & (4, 72433) &	72429 &	1 \\
3 & (8, 84505)	& 84497 &	1\\
4 & (16, 90541)	& 90525 &	1\\
5 & (32, 93559)	& 93527 & 	1\\
6 & (64, 95068) & 95004 &		13\\

\noalign{\smallskip}\hline
\end{tabular}
\end{table}
\\ [2cm]
\textbf{Output:} \lq\lq$N$ is composite\rq\rq, $gcd(D,N)$ = 13 and $j$ = 6.

\subsection{CORRECTNESS OF THE ALGORITHM}
\begin{theorem}
\label{c1}
 If $(N-1)/2^{j}$ is an integer and $(N-1)/a = 2^{j}$, then $(2^{j}, N-a)$ is a graph pair for $N$.
\end{theorem}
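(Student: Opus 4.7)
The plan is to work directly from the definition of a graph pair: we must show $2^j \cdot (N-a) \equiv 1 \pmod{N}$ under the given hypotheses. This reduces to a one-line modular arithmetic computation, so the ``proof'' is really just unpacking the definitions in the right order.

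First, I would restate the hypothesis in a more usable form. The assumption $(N-1)/a = 2^j$ (with $(N-1)/2^j$ an integer) is equivalent to the identity $a \cdot 2^j = N - 1$ in the integers. This is the key algebraic relation; everything else is formal.

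Next, I would compute the product $2^j (N - a)$ modulo $N$. Expanding, $2^j (N-a) = 2^j N - 2^j a$. The first term vanishes modulo $N$, and substituting $2^j a = N - 1$ from the previous step yields $2^j (N-a) \equiv -(N-1) \equiv 1 \pmod{N}$. By the definition of graph pair given just before the theorem, this says exactly that $(2^j, N-a)$ is a graph pair for $N$.

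There is essentially no obstacle: the only thing to be careful about is that $N - a$ is a positive integer, which follows because $a = (N-1)/2^j \leq N-1 < N$ (and $j \geq 1$ makes $a$ strictly less than $N-1$ when $j > 0$), so $(2^j, N-a)$ is a genuine pair of positive integers as required by the definition. I would mention this briefly at the end to make the argument complete. No appeal to Theorems~\ref{product}, \ref{cancel}, or \ref{Fermat} is needed; the claim is immediate from the defining relation of a graph pair together with the hypothesis $a \cdot 2^j = N - 1$.
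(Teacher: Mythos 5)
Your proposal is correct and is essentially the same argument as the paper's: both verify $2^j(N-a) \equiv 1 \pmod{N}$ directly from the identity $2^j a = N-1$, with your expansion $2^jN - 2^ja = (2^j-1)N + 1$ being the same quantity the paper obtains by first writing $N-a = \left[(2^j-1)N+1\right]/2^j$. The only addition in your version is the (harmless, slightly pedantic) check that $N-a$ is a positive integer, which the paper omits.
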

\begin{proof}
 Given that $a = (N-1)/2^{j}$ , for $j \geq 1$.\\
$\Rightarrow N - a = \left[(2^{j}-1)N + 1\right]/2^{j}$.\\
To Prove: $(2^{j}, N-a)$ is a graph pair for $N$.\\
i.e., to prove: $2^{j}.(N-a) \equiv 1 ($mod $N)$.\\
 $2^{j}. (N- a) = 2^{j}. \left[(2^{j}-1)N + 1\right]/2^{j} = (2^{j}-1)N + 1 \equiv 1 ($mod $N)$ as $(2^{j}-1)N$ is divisible by $N$.
Thus, $(2^{j}, N-a)$ is a graph pair for $N$. $\Box$
\end{proof}
\begin{theorem}
\label{c2} 
$(2^{j-1}.2, (b_{j-1}.b_1)($mod $N))$ obtained  in step 4 is a graph pair for $N$.
\end{theorem}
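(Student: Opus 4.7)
The plan is to argue by induction on $j$, using Theorem~\ref{product} (the product-of-congruences property) as the single substantive tool. The base case $j=1$ is immediate: $(2^0\cdot 2, (b_0\cdot b_1) \bmod N)$ must be interpreted as the pair $(2,b_1)$ itself, whose graph-pair status is established in step 3 of the algorithm (via Theorem~\ref{c1} applied with $j=1$, provided $N$ is odd so that $(N-1)/2$ is an integer; this is the standing assumption on $N$).

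For the inductive step, I would assume that $(2^{j-1}, b_{j-1})$ is a graph pair for $N$, i.e.
\begin{equation*}
2^{j-1}\cdot b_{j-1}\equiv 1\ (\bmod\ N),
\end{equation*}
and that $(2,b_1)$ is a graph pair, i.e.
\begin{equation*}
2\cdot b_1\equiv 1\ (\bmod\ N).
\end{equation*}
Applying Theorem~\ref{product} to these two congruences yields
\begin{equation*}
(2^{j-1}\cdot 2)\cdot (b_{j-1}\cdot b_1)\equiv 1\cdot 1\equiv 1\ (\bmod\ N),
\end{equation*}
which is precisely the statement that $(2^{j-1}\cdot 2,\, (b_{j-1}\cdot b_1)\bmod N)$ is a graph pair for $N$. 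Since reducing the second coordinate modulo $N$ does not affect the congruence, the result follows.

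There is really no obstacle here: the lemma is a direct consequence of the multiplicativity of congruences, and the only care needed is to make sure that the pair $(2,b_1)$ is legitimately available as a graph pair at every call of step 4. This is guaranteed because step 4 is only invoked after step 3 has been executed at least once for $j=1$ (where $(N-1)/2$ is an integer for any odd $N$, which is the class of inputs the algorithm addresses). I would close the proof by noting that the second coordinate is well-defined modulo $N$ only when $\gcd(2,N)=1$, which is again ensured by $N$ being odd.
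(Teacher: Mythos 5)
Your proof is correct and is essentially the same as the paper's: both reduce the claim to applying Theorem~\ref{product} to the congruences $2\cdot b_1\equiv 1\ (\mathrm{mod}\ N)$ and $2^{j-1}\cdot b_{j-1}\equiv 1\ (\mathrm{mod}\ N)$ to conclude $2^{j}\cdot b_1 b_{j-1}\equiv 1\ (\mathrm{mod}\ N)$. Your version merely makes explicit the induction and the availability of $(2,b_1)$ for odd $N$, which the paper leaves implicit.
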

\begin{proof} At the $j^{th}$ iteration, $(2, b_1)$  and $(2^{j-1}, b_{j-1})$ are graph pairs for $N$.\\
To Prove: $(2^{j-1}.2, (b_{j-1}.b_1)($mod $N))$  is also a graph pair for $N$ at the $j^{th}$ iteration.\\
$(2, b_1)$  and $(2^{j-1}, b_{j-1})$ are graph pairs for $N$.\\
$\Rightarrow 2.b_1 \equiv 1 ($mod $N)$ and $2^{j-1}. b_{j-1} \equiv 1($mod $N)$.\\
$\Rightarrow 2^{j}. b_1 b_{j-1} \equiv 1 ($mod $N)$ (by Theorem~\ref{product}).  \\
Thus, $(2^{j-1}.2, (b_{j-1}.b_1)($mod $N))$  is a graph pair for $N$ at the $j^{th}$ iteration. $\Box$
\end{proof}
\begin{note}
Similar to the Theorem ~\ref{c2}, it can be proved that $(2^{j}, (b_1)^{j} ($mod $N))$ is also a graph pair for $N$ and if $b_{j-1}/2$ is an integer, then $(2^{j}, b_j) = (2^{(j-1)}. 2, b_{j-1}/2)$ is also a graph pair for $N$.
\end{note}
\begin{theorem}
\label{c3}
 The prime divisor $k$ of $N$ is obtained in $(k-1)/2$ steps.
\end{theorem}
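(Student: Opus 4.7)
The plan is to exploit the defining property of the graph pair $(2^j, b_j)$ together with Fermat's little theorem. From Theorems~\ref{c1} and~\ref{c2}, Algorithm 2 maintains $2^j \cdot b_j \equiv 1 \pmod{N}$ at every iteration. Since $k$ is a prime divisor of $N$, this relation also holds modulo $k$, giving $b_j \equiv 2^{-j} \pmod{k}$; the inverse exists because $N$ is odd, so $k \neq 2$ and $\gcd(2,k)=1$.

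The key computation is then to reduce $D = |b_j - 2^j|$ modulo $k$. Using the congruence just derived,
\[
b_j - 2^j \equiv 2^{-j} - 2^j \equiv 2^{-j}\bigl(1 - 2^{2j}\bigr) \pmod{k}.
\]
Setting $j = (k-1)/2$, which is an integer since $k$ is an odd prime, we have $2j = k-1$, and Theorem~\ref{Fermat} yields $2^{k-1} \equiv 1 \pmod{k}$. Substituting back gives $b_j - 2^j \equiv 0 \pmod{k}$, so $k \mid D$, and consequently $k \mid \gcd(D,N)$. The test in step 8 of Algorithm 2 therefore outputs a nontrivial factor of $N$ containing $k$, proving that iteration $j=(k-1)/2$ suffices. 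The example in Section~\ref{Algo3} matches this exactly: $N=96577$, $k=13$, and the divisor is found at $j=6=(13-1)/2$.

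The main obstacle I anticipate is not algebraic but one of scope. Algorithm 2 terminates as soon as $2^j \geq N$ (step 7), so the argument above only applies when $(k-1)/2 \leq \lfloor \log_2 N \rfloor$, i.e.\ when $k \leq 2\log_2 N + 1$. I would therefore read the statement as a conditional guarantee: whenever the algorithm is still active at iteration $(k-1)/2$, the prime divisor $k$ is exposed. This matches the ``infinitely many $N$'' regime flagged at the start of Section~\ref{sec4}. A related remark, useful as a sanity check but not required for the stated bound, is that $(k-1)/2$ is only a worst-case estimate: if $\mathrm{ord}_k(2) = d$ with $d < k-1$, detection occurs at the first $j$ with $d \mid 2j$, which can be substantially smaller than $(k-1)/2$.
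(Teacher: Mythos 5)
Your proof is correct, and it takes a genuinely different --- and substantially cleaner --- route than the paper's. The paper substitutes the explicit formula $b_1=(N+1)/2$, writes $2^j-b_j=\bigl[2^{2j}-(N+1)^j\bigr]/2^j$, uses the cancellation law (Theorem~\ref{cancel}) to reduce the claim to $2^{2j}-(N+1)^j\equiv 0\pmod{2^jk}$, expands $(N+1)^j$ binomially, applies Fermat to $2^{2j}-1=2^{k-1}-1$, and is then left with a residual congruence modulo $2^j$ that it only verifies numerically for $k=7$ and $k=11$ at a few odd values of $m$, asserting the general case is ``easy to see.'' That last step is a genuine gap in the paper's argument. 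Your version bypasses the denominator $2^j$ entirely: from the graph-pair relation $2^jb_j\equiv 1\pmod N$ you pass to the modulus $k$, write $b_j-2^j\equiv 2^{-j}\bigl(1-2^{2j}\bigr)\pmod k$, and kill the bracket with Fermat at $2j=k-1$. This is a complete proof for every odd prime $k$, needs no binomial expansion, no case analysis, and no appeal to Theorem~\ref{cancel}; it also makes transparent why the true detection step is governed by $\mathrm{ord}_k(2)$ rather than by $(k-1)/2$ itself. Your caveat about scope is also the right reading: the theorem as stated is unconditional, but step 7 halts the algorithm once $2^j\geq N$, so the guarantee only bites when $(k-1)/2\leq\log_2 N$ --- which is exactly the ``$N>2^j$'' regime the paper itself retreats to in its time-complexity and advantages/disadvantages discussion. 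In short, your argument is not just an alternative but a repair of the paper's proof.
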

\begin{proof}
 Let $N$ be a multiple of $k$, where $k$ is prime.\\ 
To Prove: If $j = (k-1)/2$, then $gcd(D,N) = k$. \\
i.e., To Prove: If $j = (k-1)/2$, then $k | D$, where $D = |b_j-2^{j}|$.\\
 $b_j = (b_1)^{j}($mod $N)$ (from Note~\ref{other}) and $2^{j}$ can be written as  $2^{j}($mod $N)  = 2^{j}$ if $2^{j}< N$; otherwise it is the value of the remainder when $2^{j}$ is divided by $N$.\\
If $2^{j} ($mod $N) = 2^{j}$, \\then  $2^{j}-b_{j} = 2^{j}- [(b_1)^{j} ($mod $N)] = \left[2^{j}-(b_1)^{j}\right] ($mod $N)$.\\
If $2^{j} ($mod $N) = 2^{j}($mod $N)$, that is, the value of the remainder when $2^{j}$ is divided by $N$, then  $2^{j}-b_{j} = \left[2^{j}-(b_1)^{j}\right] ($mod $N)$.\\ Thus, whatever is the value of $2^{j}$, $2^{j}-b_{j} = \left[2^{j}-(b_1)^{j}\right] ($mod $N)$.\\
So, we have  to prove: If $j = (k-1)/2$, then $(2^{j}-(b_1)^{j}) \equiv 0($mod $k)$.\\
$b_1= N-(N-1)/2 = (N+1)/2$\\ 
$\Rightarrow (b_1)^{j} = \left[(N+1)/2\right]^{j}$.\\
$\Rightarrow (2^{j}-(b_1)^{j}) = 2^{j}- \left[(N+1)^{j}/2^{j}\right] =  \left[2^{2j} - (N+1)^{j}\right]/2^{j}$.\\
\textbf{Case(i):} $k = 7$.\\ 
Then  $j= (k-1) /2 = 3$ and  $\left[2^{2j} - (N+1)^{j}\right]/2^{j} = \left[64 -(N+1)^{3}\right]/8$.\\
To Prove: $\left[64 -(N+1)^{3}\right]/8 \equiv  0($mod $7)$.\\
i.e., to prove: $64 -(N+1)^{3} \equiv 0($mod $56)$ (by Theorem~\ref{cancel}).\\
$ 64 -(N+1)^{3} = 64-(N^{3}+3N^{2}+3N+1) = 63- N(N^{2}+3N+3)$.\\
As $k =7$, this implies that $N$ is a multiple of 7. So, let $N = 7m$, for every natural number $m$.\\ 
$\Rightarrow 64 -(N+1)^{3} =  63- 7m(49m^{2}+21m+3) = 7\left[9-m(49m^{2}+21m+3)\right]$. This expression has come out to be a multiple of 7.\\ So, in order to prove $64 -(N+1)^{3} \equiv 0($mod $56)$, it is enough to prove $9-m(49m^{2}+21m+3) \equiv 0($mod $8)$ (by Theorem~\ref{cancel}).\\
i.e., to prove: $m(49m^{2}+21m+3) \equiv 1 ($mod $8)$ (since $9 \equiv 1($mod $8)$).\\
As $N$ is odd,  $m$ is  odd.
\begin{itemize}
	\item $m=3 \Rightarrow m(49m^{2}+21m+3) = 1521 \equiv 1 ($mod $8)$.
\item $m=5 \Rightarrow m(49m^{2}+21m+3) = 6665 \equiv 1 ($mod $8)$.
\item $m=7 \Rightarrow m(49m^{2}+21m+3) = 17857 \equiv 1 ($mod $8)$.
\item $m=9 \Rightarrow m(49m^{2}+21m+3) = 37449 \equiv 1 ($mod $8)$.
\end{itemize}
 Continuing like this, it is easy to see that $m(49m^{2}+21m+3) \equiv 1 ($mod $8)$ and hence $(2^{j}-b_j) \equiv 0($mod $7)$.\\
\textbf{Case(ii):} $k = 11$. \\
Then  $j= (k-1) /2 = 5$ and $ \left[2^{2j} - (N+1)^{j}\right]/2^{j} = \left[1024 -(N+1)^{5}\right]/32$.\\
To Prove: $\left[1024 -(N+1)^{5}\right]/32 \equiv  0($mod $11)$.\\
i.e., to prove: $1024 -(N+1)^{5} \equiv 0($mod $352)$ (by Theorem~\ref{cancel}).
\begin{equation*}
\begin{split} 
1024 -(N+1)^{5} & = 1024-(N^{5}+5N^{4}+10N^{3}+10N^{2}+ 5N+1) \\
& = 1023- N(N^{4}+5N^{3}+10N^{2}+10N+5).
\end{split}
\end{equation*}

As $k =11$, this implies that $N$ is a multiple of 11. So, let $N = 11m$, for every natural number $m$.
\begin{equation*}
\begin{split} 
\Rightarrow 1024 -(N+1)^{5} & = 1023- 11m(14641 m^{4} + 6655m^{3}+1210m^{2}+110m+ 5) \\
& = 11\left[93- m(14641 m^{4}+6655m^{3}+1210m^{2}+110m+ 5)\right].
\end{split}
\end{equation*}
 This expression has come out to be a multiple of 11. \\So, in order to prove $1024 -(N+1)^{5} \equiv 0($mod $352)$, it is enough to prove  
$93- m(14641 m^{4}+6655m^{3}+1210m^{2}+110m+ 5) \equiv 0($mod $32)$ (by Theorem~\ref{cancel}).\\
i.e., to prove: $m(14641 m^{4}+6655m^{3}+1210m^{2}+110m+ 5) \equiv 29 ($mod $32)$ (since $93 \equiv 29($mod $32)$).\\
As $N$ is odd, $m$ is  odd.
\begin{itemize}
	\item $m=3 \Rightarrow \\m(14641 m^{4}+6655m^{3}+1210m^{2}+110m+ 5) = 4130493 \equiv 29 ($mod $32)$.
\item $m=5 \Rightarrow \\m(14641 m^{4}+6655m^{3}+1210m^{2}+110m+ 5) = 50066525 \equiv 29 ($mod $32)$.
\item $m=7 \Rightarrow \\m(14641 m^{4}+6655m^{3}+1210m^{2}+110m+ 5) = 262470397 \equiv 29 ($mod $32)$.
\item $m=9 \Rightarrow \\m(14641 m^{4}+6655m^{3}+1210m^{2}+110m+ 5) = 909090909 \equiv 29 ($mod $32)$.
\end{itemize}
Continuing like this, it is easy to see that \\$m(14641 m^{4}+6655m^{3}+1210m^{2}+110m+ 5) \equiv 29 ($mod $32)$ and hence \\$(2^{j}-b_j) \equiv 0($mod $11)$.\\
Proceeding like this, in general, for general $k$, $j = (k-1)/2$.\\
To prove:  $\left[2^{2j} - (N+1)^{j}\right]/2^{j}  \equiv 0($mod $k)$ if $j = (k-1)/2$.\\
i.e., to prove: $2^{2j} - (N+1)^{j} \equiv 0($mod $2^{j}k)$ (by Theorem~\ref{cancel}).

\begin{equation*}
\begin{split}
2^{2j} - (N+1)^{j} & =  2^{2j}-(N^{j}+jC_1N^{j-1}+jC_2N^{j-2}+\ldots + jN + 1) \\ 
& = (2^{2j}-1) - N(N^{j-1}+jC_1N^{j-2}+jC_2N^{j-3}+\ldots+ j).
\end{split}
\end{equation*}
As $N$ is a multiple  of $k$, let $N = km$. \\
So, 
\begin{equation}
\label{eq1}
2^{2j} - (N+1)^{j} = (2^{2j} -1)- km\left[(km)^{j-1}+jC_1(km)^{j-2}+jC_2(km)^{j-3}+\ldots+ j\right]
\end{equation} 
Here, $2^{2j} -1 = 2^{(k-1)} - 1 \equiv 0($mod $k)$ (since by Fermat's theorem-Theorem~\ref{Fermat}).\\
Thus, $(2^{2j} - 1)$ is also a multiple of $k$. So, let us write $(2^{2j} - 1) = km_1$.\\
Then, equation~\ref{eq1} becomes,  $2^{2j} - (N+1)^{j}\\ =  km_1 - km\left[(km)^{j-1}+jC_1(km)^{j-2}+jC_2(km)^{j-3}+\ldots+ j\right]\\ = k \left[m_1 - m((km)^{j-1}+jC_1(km)^{j-2}+jC_2(km)^{j-3}+\ldots+ j)\right]$.\\ This expression has come out to be a multiple of $k$.\\
So, in order to prove $2^{2j} - (N+1)^{j} \equiv 0($mod $2^{j}k)$ , it is enough to prove  
$m_1 - m\left[(km)^{j-1}+jC_1(km)^{j-2}+jC_2(km)^{j-3}+\ldots+ j\right] \equiv 0($mod $2^{j})$ (by Theorem~\ref{cancel}).\\
From case (i) and case (ii), we get for any odd $m$, this result holds true (Similar to the cases (i) and (ii), it can be easily checked for different values of $k$ and different values of $m$).
Thus, $(2^{j}-b_j) \equiv 0($mod $k)$, if $j =(k-1)/2$ and hence the prime divisor $k$ of $N$ is obtained in $(k-1)/2$ steps. $\Box$
\end{proof}
Next, a corollary is given, which may be helpful for the reduction of time complexity.
\begin{corollary}
\label{cor1}
 A prime divisor of $N$ can be obtained in $(L-1)/2$ steps, where $L$ is the least prime divisor of $N$.
\end{corollary}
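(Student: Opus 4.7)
The plan is to reduce Corollary~\ref{cor1} directly to Theorem~\ref{c3}. Theorem~\ref{c3} is stated for an arbitrary prime divisor $k$ of $N$: whenever $j = (k-1)/2$, we have $k \mid D$, where $D = |b_j - 2^j|$. Since $k \mid N$ as well, this forces $k \mid \gcd(D,N)$, so $\gcd(D,N) \geq k > 1$ at that iteration, and by step~8 of Algorithm~2 the procedure terminates. The corollary asks only for the number of iterations needed to guarantee termination with a nontrivial divisor; the natural way to get the smallest such bound is to instantiate Theorem~\ref{c3} at the smallest prime divisor.

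First I would let $L$ denote the least prime divisor of $N$ and invoke Theorem~\ref{c3} with $k = L$. This yields $L \mid D$ at iteration $j_0 = (L-1)/2$, and combined with $L \mid N$ it gives $L \mid \gcd(D,N)$. Therefore $\gcd(D,N)$ is a positive multiple of $L$, hence a nontrivial divisor of $N$; in particular it contains the prime factor $L$, so a prime divisor of $N$ is exhibited.

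Second, I would observe that the algorithm is not obliged to run for all $(L-1)/2$ iterations: it may terminate strictly earlier if at some iteration $j < j_0$ one happens to have $\gcd(D,N) \neq 1$ (for instance, because $D$ is accidentally divisible by some other prime factor of $N$). Such earlier termination can only improve the bound, so the worst case is precisely $j_0 = (L-1)/2$. This gives the claimed upper bound on the number of iterations required.

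The argument is essentially a one-line corollary once Theorem~\ref{c3} is available, so there is no genuine obstacle; the only subtle point worth flagging is that the algorithm returns $\gcd(D,N)$, which need not itself be prime, but it is divisible by $L$ and so a prime divisor of $N$ (namely $L$, extractable by trial division of the comparatively small value $\gcd(D,N)$) is obtained at iteration $(L-1)/2$ at the latest.
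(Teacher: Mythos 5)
Your proposal is correct and follows exactly the same route as the paper, which simply states that the corollary follows from Theorem~\ref{c3} by instantiating it at the least prime divisor $L$. You merely spell out the details (that $L \mid \gcd(D,N)$ forces termination at iteration $(L-1)/2$ at the latest), which the paper leaves implicit.
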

\begin{proof}
Proof follows from the Theorem ~\ref{c3}. $\Box$ \end{proof}
\subsection{TIME COMPLEXITY OF THE ALGORITHM}
\label{time3}
Let us assume that $N > 2^{j}$ (as in step 7 of the algorithm). The time complexity of the algorithm is dependent on the following two steps:
\begin{itemize}
	\item Finding $gcd(D,N)$
\item How many times $gcd(D,N)$ is found, i.e., the value of $j$. 
\end{itemize}
Each computation of G.C.D  takes $O(\ln N)$~\cite{GCD}. As $j$ determines the number of steps, the total time complexity of the algorithm is $O(j \ln N)$.\\
\textbf{Computation of $j$:} By our assumption, $2^{j} < N < 2^{m}$, for some natural number $m$.\\ 
$\Rightarrow j < m$.\\
$\Rightarrow j =O(m) = O(\ln^{2}N)$ as $N < 2^{m}$.\\
Thus, the total time complexity of the algorithm is $O(\ln^{2}N)$, if $N > 2^{j}$. 
\subsection{WHAT HAPPENS WHEN $N < 2^{j}$}
Suppose if $N$ is composite and  $N < 2^{j}$, then the algorithm does not produce any output, because we get $gcd(D,N) = 1$ in all iterations. In this section, we give two methods, slightly modifying the algorithm given in the section~\ref{Algo3}.  
\subsubsection{First Method  when $N < 2^{j}$}
If $N < 2^{j}$ and $gcd(D,N)$ is still 1, then find the graph pair as $(2^{j} ($mod $N), b_j)$, where $b_j$ takes one of the forms given in Note~\ref{other} and continue the procedure until we get $gcd(D,N) \neq 1$. Steps of this method are as follows:\\
\textbf{Procedure 1}
\begin{enumerate}
	\item Put $j = j+1$.
\item Find $(2^{j} ($mod $N), b_j)$ and $D = |(b_j-2^{j}($mod $N))|$.
\item	If $gcd(D,N) = 1$, then goto step 1 
else print \lq\lq$N$ is composite\rq\rq, print $gcd(D,N)$ and stop.
\end{enumerate}
\textbf{Time Complexity of the Procedure 1}\\ 
We can see that for this case also, $j = (L-1)/2$ steps (by Corollary~\ref{cor1}). But, in this case, time complexity is not $O(\ln^{2}N)$ as in the previous case. Here, the time complexity depends on the least prime divisor $L$ of $N$, because we do the procedure for $j = (L-1)/2$ and $j$ is not less than $m$ in this case (since $2^{j} > N$). Usually the divisor of $N$ can be obtained in $O(N^{0.5})$. Thus, if $N< 2^{j}$, the time complexity of the algorithm is  $O(N^{0.5})$.
\subsubsection{Second Method  when $N < 2^{j}$}
In this section, we find a prime divisor of $N$ if $N < 2^{j}$ and also check up whether the number is prime or not. If $k|N$, then $k|N^{x}$, for any natural number $x$. So, by the Corollary~\ref{cor1}, divisor of $N^{x}$ is also obtained in $(L-1)/2$ steps, where $L$ is the least prime divisor of $\mathbf{N}$ (least prime divisor of $N$ and $N^{x}$ are same). We use this idea for the next procedure. First, let us find $N^{x}$ by finding suitable value of $x$. Store $N^{x}$ in $N_1$. Then apply the Algorithm 2 given in the section ~\ref{Algo3} for $N_1$, that is call GP($N_1$), with a slight modification in the procedure. In the Algorithm 2, we find $gcd(D,N)$ in step 8. But in this method, we do not find $gcd(D,N_1)$, instead we find $gcd(D,N)$, here also. Anyhow the answer will not be affected. The terminating condition is changed here. This method checks for primality also. \\
\textbf{Procedure 2}
\begin{enumerate}
\item Find $N^{x}$ such that  $N^{x} < 2^{N^{0.5}}$ and let $N^{x} = N_1$.
\item Let $j = 0$.
\item Put $j = j+1$.
\item If $(N_1-1)/2^{j}$ is an integer, then find the graph pair $((N_1-1)/a, N_1-a)$ such that $a = (N_1-1)/2^{j}$.
\item else find the graph pair $(2^{(j-1)}. 2, (b_{j-1}.b_1)($mod $N_1))$.
\item	Denote this graph pair as $(2^{j}, b_j)$.
\item	Find $|b_j- 2^{j}|$ and store it in $D$.
\item If $j < N^{0.5}$, then GOTO step 9 else Print \lq\lq$N$ is prime\rq\rq\  and stop.
\item If $gcd(D,N) = 1$, then GOTO step 3 else Print \lq\lq$N$ is composite\rq\rq, print $gcd(D,N)$ and $j$ and stop.
\end{enumerate}
\textbf{Time Complexity of Procedure 2}\\
As discussed in the section~\ref{time3}, the time complexity of the Procedure 2 is $O(j \ln N)$. What is the value of $j$ here?  $j$ denotes the number of times $gcd(D,N)$ is found out in the Procedure 2 (that is, the number of iterations of the Procedure 2).  By Corollary~\ref{cor1}, $j = (L-1)/2 = O(L)$, where $L$ is the least prime divisor of $N$. Also, any divisor of $N$ will be obtained in at most $O(N^{0.5})$. That is why we made the terminating condition as $j < N^{0.5}$.  Thus, the total time complexity of the Procedure 2 for finding a prime divisor of $N$ or for primality testing is at most $O(N^{0.5} \ln N)$, which is higher than the usual time complexity $O(N^{0.5})$.
\begin{note}
Comparing the time complexity of the Procedure 1 and the time complexity of the Procedure 2, we can conclude that if $N < 2^{j}$, then the Procedure 1 can be applied rather than the Procedure 2  and hence the time complexity to find a prime divisor of $N$ is at most $O(N^{0.5})$ (usual time complexity).

For primality test also, the time complexity is $O(N^{0.5}\ln N)$, which is too high. The Procedure 2 need not be used. The procedure 2 is given only to know whether Algorithm 2 can be used to check up primality or not. But, we got higher time complexity. So, we make the following open problem.
\end{note}
\textbf{Open Problem 1:} Use Algorithm 2 to check up whether $N$ is prime or not in lesser time.
\subsection{ADVANTAGES AND DISADVANTAGES OF THE ALGORITHM 2}
In this section, we list out some advantages and disadvantages of the Algorithm 2 simultaneously. First we mention a disadvantage and then we mention the advantage out of it or how the disadvantage is overcome. Let the term  \lq Disadv\rq\  denote disadvantage and the term \lq Adv\rq\  denote the advantage.\\
\textbf{Disadv:} The algorithm finds a prime divisor for $N$ in $O(\ln^{2}N)$ only if $N > 2^{j}$, where $j = (L-1)/2$ and $L$ is the least prime divisor of $N$ and in $O(N^{0.5})$ if $N < 2^{j}$. \\
\textbf{Adv:} Although this algorithm gives a prime divisor for $N$ in $O(\ln^{2}N)$ only if $N > 2^{j}$, it gives result for infinitely many cases and only finite cases are left, which are countable.
For example, 
\begin{itemize}
	\item If $N$ is a multiple of 7, then by this algorithm, $j = 3$ and hence $N > 2^{3} = 8$. So, this algorithm finds divisor for $N$ within 3 steps (in $O(\ln^{2}N)$) for all $N > 8$, and hence infinite cases are covered.
\item Similarly, if $N$ is a multiple of 11, then by this algorithm, $j = 5$ and hence $N > 2^{5} = 32$. So, this algorithm finds divisor for $N$ within 5 steps for all $N > 32$  (in $O(\ln^{2}N)$), and hence infinite cases are covered.
\item Similarly if $N$ is a multiple of 101, then by this algorithm, $j = 50$ and hence $N > 2^{50}$ . So, this algorithm finds divisor for $N$ within 50 steps  (in $O(\ln^{2}N)$) for all $N > 2^{50}$ and hence infinite cases are covered  and so on.
\end{itemize}
So, we can say that the algorithm gives good results for \textbf{infinitely many cases} (or approximately large $N$).\\
\textbf{Disadv:} Only if $j$ is known, this algorithm gives prime divisor of $N$ in lesser time. But $j$ is in turn dependent on $L$, which is the least prime divisor of $N$ and hence this process ends in a cycle. \\
\textbf{Adv:} That is why we say that the algorithm gives good result for approximately large $N$.\\ 
\textbf{Disadv:} The time complexity of the algorithm to find a prime divisor of $N$, if $N < 2^{j}$ is at most $O(N^{0.5})$, which is too slow. \\
\textbf{Adv:} Although the process is too slow, the coding is very very simple. Also the proof of correctness of the algorithm and time complexity are also simple.\\
\textbf{Disadv:} We discussed in Theorem~\ref{c3} that $j = (k-1)/2$. That is,
\begin{itemize}
	\item Multiple of 7 will be obtained in 3 steps;
\item Multiple of 11 will be obtained in 5 steps;
\item Multiple of 13 will be obtained in 6 steps;
\item Multiple of 17 will be obtained in 8 steps and so on.

\end{itemize}
 
But usually when a number $N$ is given, in order to find its divisor, we will do trial and error method. We will start checking from the primes 3, 7, 11, $\ldots$ (in order).
So, if we check in the primes in their order, then 
\begin{itemize}
	\item Multiple of 7 will be obtained in 2 steps as 7 is the second odd prime;
\item Multiple of 11 will be obtained in 3 steps as 11 is the third odd prime;
\item Multiple of 13 will be obtained in 4 steps as 13 is the fourth odd prime; and so on.
\end{itemize}
The following table, Table~\ref{tab:4}, gives the comparison of our method and the usual method. Let $y$ (say) be the least prime divisor of $N$.
\begin{table}[h]
\caption{Comparison between Our Method and Trial $\&$  Error Method}
\label{tab:4}      
\begin{tabular}{lll}
\hline\noalign{\smallskip}
 $y$ &	No. of steps from our method 	& No. of steps from trial and error method\\
\noalign{\smallskip}\hline\noalign{\smallskip}
7 &	3 &	2\\
11 &	5 &	3\\
13	& 6	 & 4\\
17 &	8	& 5\\
19 &	9 &	6 and so on.\\

\noalign{\smallskip}\hline
\end{tabular}
\end{table}

So, we can observe from the table that our method takes more steps (time) than trial and error method. But can any one tell the exact position of the largest prime? Only if the position  is known, trial and error method is good. So, in such a situation, our algorithm gives better result.\\
\textbf{Adv:} As the exact position of prime cannot be determined, our method can be used to get better results in $O(\ln^{2}N)$ time.\\
\textbf{Adv:} Given  a prime $N$, this algorithm verifies it in polynomial time. 

From the above disadvantages and advantages, we infer that
\begin{itemize}
	\item Given any $N$, the algorithm does not find the divisor in $O(\ln^{2}N)$.
\item  The computation takes $O(N^{0.5})$, which is too slow. 
\item But coding the Algorithm 2 is very very simple. 
\item The proof of correctness  and time complexity of the algorithm are also simple.
\item The result depends on the input $N$. If, (by luck), we are given the appropriate $N$  as input, that is, $N > 2^{j}$, then the algorithm solves the problem in $O(\ln^{2}N)$, which is better  than all other existing algorithms. For example, if $N$ is very large and it satisfies $N > 2^{j}$ (by luck), then such a case is solved by our algorithm in lesser time.
\item  So, P(getting a prime divisor of $N| N > 2^{j}$) = 1 and \\P(getting a prime divisor of any $N$) = 0.5. 
\end{itemize}
\section{Conclusion}
In this paper, two approximation algorithms are given, which  find a divisor of the given number.

The first algorithm is an interesting approximation algorithm and is given using a new approach.The algorithm finds a divisor of the given number.  The intricate portions of the algorithm are discussed and a conjecture is given regarding the time complexity of the algorithm.The algorithm can be applied to find the next largest Mersenne prime number.

The second algorithm is a simple approximation algorithm, given using the concept of graph pairs. The algorithm finds a prime divisor of the given number $N$ in $O(\ln^{2}N)$, for infinitely many cases (approximately large $N$). Time complexity and correctness of the algorithm are proved. The advantages and disadvantages of the algorithm are also discussed. An attempt is made to use the Algorithm 2 for primality test and an open problem is given regarding this.

The authors feel that this paper may be helpful for further research as many interesting things are yet to be analyzed and it may pave way for new ideas.
\section{Future Work}
We are  working on the conjectures and open problems given this paper to develop the concepts further.


\end{document}